\newtheorem{lemma}{Lemma}[section]
\newtheorem{proposition}[lemma]{Proposition}
\newtheorem*{thm*}{Theorem}
\theoremstyle{definition}
\newtheorem{definition}[lemma]{Definition}
\newtheorem{notation}[lemma]{Notation}
\newtheorem*{SEP}{Stable equivalence problem}
\newtheorem*{GCP}{Generalized cancellation problem}
\theoremstyle{remark}
\newtheorem{remark}[lemma]{Remark}
\newtheorem{example}[lemma]{Example}
\newcommand{\End}{\textup{End}}
\newcommand{\LND}{\textup{LND}}
\newcommand{\Ker}{\textup{Ker}}
\newcommand{\C}{\mathbb{C}}
\newcommand{\N}{\mathbb{N}}
\title{A note on the stable equivalence problem}
\author{Pierre-Marie Poloni}
\address{Universit\"{a}t Basel, Mathematisches Institut, Rheinsprung $21$, CH-$4051$ Basel, Switzerland.}
\email{pierre-marie.poloni@unibas.ch}
\begin{document}
\maketitle
\begin{abstract} We provide counterexamples to the stable equivalence problem in every dimension $d\geq2$. That means that we construct hypersurfaces $H_1, H_2\subset\C^{d+1}$ whose cylinders $H_1\times\C$ and $H_2\times\C$ are equivalent hypersurfaces in $\C^{d+2}$, although $H_1$ and $H_2$ themselves are not equivalent by an automorphism of $\C^{d+1}$. We also give, for every $d\geq2$, examples of two non-isomorphic algebraic varieties of dimension $d$ which are biholomorphic.\end{abstract}

\section{Introduction}

The well known generalized cancellation problem asks the following question.

\begin{GCP} Given two complex affine varieties $V_1$ and $V_2$ with the property that  $V_1\times\C^m$ and $V_2\times\C^m$ are isomorphic for some $m\in\N$. Does this imply that $V_1$ and $V_2$ are isomorphic?\end{GCP}

An affirmative answer  was given by Abhyankar, Eakin and Heinzer \cite{AEH} for the case of affine curves. The cancellation property holds also in the case where $V_1$ (or $V_2$) has nonnegative logarithmic Kodaira dimension. This was shown by  Iitaka and Fujita  in \cite{IF}. However, the answer to the generalized cancellation problem turns out to be negative in general.  The first counterexamples are surfaces due to Danielewski \cite{Danielewski} (see also \cite{Fieseler}). Later on, Danielewski's construction was generalized by  Dubouloz \cite{Dubouloz} to produce counterexamples of every dimension $d\geq2$ (see also \cite{FM} and \cite{DMJP} for factorial and contractible 3-dimensional examples).

In 2004, Makar-Limanov, van Rossum, Shpilrain and  Yu \cite{MLRSY} considered  the following analogous problem.

\begin{SEP} If two hypersurfaces in $\C^n$ are stably equivalent, are they equivalent?
\end{SEP}

Recall that two algebraic varieties $V_1,V_2$ in $\C^n$ are said to be \emph{equivalent} if there exists a polynomial automorphism of $\C^n$ which maps $V_1$ onto $V_2$, and that they are said to be \emph{stably equivalent} if there is an integer $m\in\N$ such that the cylinders $V_1\times\C^m$ and $V_2\times\C^m$ are equivalent varieties in $\C^{n+m}$.
The stable equivalent  problem has a positive answer for affine plane curves, as already shown by Makar-Limanov, van Rossum, Shpilrain and  Yu in \cite{MLRSY}. In the same vein of the result of Iitaka-Fujita,  Drylo proved in \cite{Drylo} that two stably equivalent hypersurfaces in $\C^n$ are equivalent, if one of them is not $\C$-uniruled. The first counterexamples in $\C^3$, consisting in families of Danielewski hypersurfaces, were provided by Moser-Jauslin and the author \cite{MJP}. Also,  contractible 3-dimensional counterexamples appeared in \cite{DMJP}.

In this note, we complete the analogy between the results on the generalized cancellation and stable equivalence problems. Indeed, we produce counterexamples to the stable equivalence problem for every $n\geq3$. These new examples are easy generalizations  of those of \cite{MJP}, inspired by the construction in \cite{Dubouloz}.

We will actually give two kinds of counterexamples. On one hand, polynomials $P,Q\in\C[X_1,\ldots,X_n]$ whose zero-sets $V(P)$ and $V(Q)$ are non-isomorphic varieties, but such that the cylinders $V(P)\times\C$ and $V(Q)\times\C$ are equivalent hypersurfaces in $\C^{n+1}$. On the other hand, polynomials $P,Q\in\C[X_1,\ldots,X_n]$ with the properties that  $V(P)\times\C$ and $V(Q)\times\C$ are equivalent hypersurfaces in $\C^{n+1}$ and  that $V(P)$ and $V(Q)$ are non-equivalent hypersurfaces in $\C^n$, although the fibers $V(P-c)$ and $V(Q-c)$ of $P$ and $Q$ are pairwise isomorphic for all $c\in\C$. More precisely, we will prove the following result.

\begin{thm*}The following assertions hold for every natural number $n\geq1$.
\begin{enumerate}
\item The hypersurfaces $H_1, H_2\subset\C^{n+2}$  defined by the equation  $x_1^2\cdots x_n^2y+z^2+x_1\cdots x_n(z^2-1)=1$ and  $x_1^2\cdots x_n^2y+z^2+x_1\cdots x_n(z^2-2)=1$, respectively, are non-isomorphic algebraic varieties such that  $H_1\times\C$ and $H_2\times\C$ are equivalent hypersurfaces in $\C^{n+3}$.
\item The polynomials $Q_k=x_1^2\cdots x_n^2y+z^2+x_1\cdots x_n(z^2-1)^k\in\C[x_1,\ldots,x_n,y,z]$ are stably equivalent for all $k\geq1$, whereas the hypersurfaces $V(Q_k)\subset\C^{n+2}$ are pairwise non-equivalent. However, the varieties $V(Q_k-c)$ and $V(Q_{k'}-c)$ are isomorphic for all $k,k'\geq1$ and every $c\in\C$.
\end{enumerate}
\end{thm*}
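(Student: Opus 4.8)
The plan is to view every hypersurface in the statement as a \emph{Danielewski variety} over $\A^n$. Setting $s=x_1\cdots x_n$, one rewrites $H_c=V\!\left(s^2y+(1+s)(z^2-1)+(c-1)s\right)$ and $V(Q_k-c)=V\!\left(s^2y+(z^2-c)+s(z^2-1)^k\right)$, and checks (a one-line Jacobian computation) that each $H_c$ is smooth. All of these carry the $\A^1$-fibration $\pi$ to $\A^n$ given by projection onto $x_1,\dots,x_n$: over $\{s\neq0\}$ one solves for $y$ and $\pi$ is the trivial line bundle in the coordinate $z$, while over $V(s)$ the fibre degenerates according to the zeros in $z$ of the defining polynomial taken modulo $s$ --- two simple zeros $z=\pm1$ for $H_c$, and two simple zeros $z=\pm\sqrt c$ for $V(Q_k-c)$ when $c\neq0$, degenerating to a double zero at $c=0$. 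I would first record this structure carefully, in particular describing each variety as built from $(\A^n\setminus V(s))\times\A^1$ by gluing in copies of $V(s)\times\A^1$ along those zeros, with gluing cocycle read off from the higher-order-in-$s$ part of the defining polynomial.

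For the positive assertions --- $H_1\times\C$ and $H_2\times\C$ equivalent in $\C^{n+3}$, and the $Q_k$ all stably equivalent --- I would exhibit an explicit polynomial automorphism $\Phi$ of $\C^{n+3}$, transcribed from the construction of \cite{MJP} (the case $n=1$) and ultimately from the ``uniformization'' of \cite{Dubouloz}. The reason it works is that $H_1$ and $H_2$ (resp.\ the $V(Q_k)$ for varying $k$) are Danielewski varieties with the \emph{same} reduction modulo $s$, i.e.\ the same configuration of zeros in $z$; the one extra free coordinate $w$ then gives exactly enough room to remove, by an automorphism, the discrepancy in the higher-order gluing data. Concretely I expect $\Phi$ to be a composition of a few triangular automorphisms --- a $w$-dependent shear, a correction of $y$, an adjustment of the $x_i$ --- and I would verify by substitution that $\Phi$ sends $V\!\left(s^2y+(1+s)(z^2-1)\right)$ onto $V\!\left(s^2y+(1+s)(z^2-1)-s\right)$, and the analogous statement for the $Q_k$. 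Note that $\Phi$ necessarily moves the $x_i$: an automorphism fixing them and acting only on $(y,z,w)$ is obstructed by the quadratic occurrence of $z$ --- consistent with $H_1\not\cong H_2$.

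The negative assertions --- $H_1\not\cong H_2$, and the $V(Q_k)$ pairwise non-equivalent in $\C^{n+2}$ --- call for a Makar--Limanov type rigidity argument, and this is where I expect the real work to lie. One first shows $\pi$ is essentially canonical: every element of $\LND$ of the coordinate ring has kernel inside the subring pulled back from $\A^n$, so that $\ML$ recovers $\C[x_1,\dots,x_n]$ (or at least the function $s$ up to the obvious symmetries). Hence an isomorphism descends to an automorphism of $\A^n$ that permutes and rescales the hyperplanes $\{x_i=0\}$, and after normalizing one is reduced to comparing gluing cocycles over $V(s)$, i.e.\ defining polynomials modulo $s^2$ up to the residual reparametrizations of $z$ and rescalings. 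For $H_1$ against $H_2$ the obstruction is that the coefficient of $s$ in the defining polynomial, evaluated at $z=\pm1$, is $0$ for $H_1$ but $1$ for $H_2$, and the residual transformations cannot reconcile these (for instance, the zero locus of $(1+s)(1-z^2)$ contains the lines $z=\pm1$, that of $(1+s)(1-z^2)+s$ does not). For the $V(Q_k)$ --- which, by the last assertion, are all \emph{abstractly} isomorphic --- the invariant is finer and must be sought in the total space: regarding $Q_k$ together with $\pi$ as a morphism $\C^{n+2}\to\C\times\A^n$, the residue of the degenerate fibre over $\{c\}\times V(s)$ is $(c-1)^k$, a polynomial in $c$ vanishing to order exactly $k$ at $c=1$; an equivalence $V(Q_k)\sim V(Q_{k'})$ would force $\alpha^*Q_{k'}=uQ_k$ with $u\in\C^*$, hence match the fibre of $Q_k$ over $c$ with the fibre of $Q_{k'}$ over $uc$, force $u=1$ from the distinguished value $c=1$, and then force $k=k'$ by the vanishing orders. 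The crux of the whole proof is making precise that isomorphisms and equivalences really do preserve all this structure --- the canonicity of $\pi$ and the invariance of the residue datum under the residual symmetries --- and this is where the known results on Danielewski varieties must be invoked with care.

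Finally, for $V(Q_k-c)\cong V(Q_{k'}-c)$ at a fixed $c$, I would appeal to the isomorphism criterion for Danielewski varieties. For $c\notin\{0,1\}$ there are two simple zeros $z=\pm\sqrt c$ with residue $(c-1)^k$ at each; since the two residues coincide, a single rescaling of the fibre coordinate normalizes them to $1$, so the variety is independent of $k$. For $c=1$ one has $V(Q_k-1)=V\!\left(s^2y+(z^2-1)\bigl(1+s(z^2-1)^{k-1}\bigr)\right)$, in which $1+s(z^2-1)^{k-1}$ is a unit along $V(s)\cup\{z=\pm1\}$, again a Danielewski variety whose class does not depend on $k$. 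For $c=0$ the single double zero $z=0$ has nonzero residue $(-1)^k$, which is again normalizable. In each case I would quote the relevant classification rather than reprove it.
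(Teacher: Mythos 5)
Your overall strategy---Danielewski-variety structure over $\A^n$, Makar--Limanov rigidity for the negative assertions, an explicit automorphism in one extra variable (transcribed from \cite{MJP} with $x$ replaced by $s=x_1\cdots x_n$) for the positive ones---coincides with the paper's; your positive half is essentially Lemma \ref{lemme-equiv-stable}. But the step you yourself flag as ``the crux'' is a genuine gap, and the mechanism you sketch for closing it fails at the pairwise non-equivalence of the $V(Q_k)$. You argue: an equivalence gives $\alpha^*Q_{k'}=uQ_k$, matching the fibre of $Q_k$ over $c$ with that of $Q_{k'}$ over $uc$; the distinguished values force $u=1$; and then ``the vanishing orders'' force $k=k'$. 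The problem is that $V(Q_k-c)$ and $V(Q_{k'}-c)$ are isomorphic for \emph{every} $c$ and all $k,k'$ --- this is the last assertion of the theorem, which you correctly prove --- so the function $c\mapsto[\text{iso class of }V(Q_k-c)]$ is literally independent of $k$, and no comparison of fibrewise isomorphism types can see $k$. The ``order of vanishing of the residue $(c-1)^k$ at $c=1$'' is a datum of the embedded total space; its invariance under equivalences is precisely what must be proved, and you defer it to ``known results invoked with care'' without identifying which result does the job.

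The paper closes this gap with two concrete moves. First, the LND computation (Proposition \ref{prop-lnd}, reduced to Dubouloz's constant-$q$ case by the explicit isomorphism $V(P_q-c)\cong V(P_{q(c)}-c)$ of Lemma \ref{lem_phic}) gives $\Ker(\delta)=\C[\underline{x}]$ and $\Ker(\delta^2)=\C[\underline{x}]z+\C[\underline{x}]$ for every nonzero $\delta\in\LND$; hence any isomorphism preserves $\C[\underline{x}]$, sends $z$ to $\alpha z+\beta(\underline{x})$ and each $x_i$ to $\lambda_i x_{\sigma(i)}$, and a short computation modulo $(\underline{x}^{[2]})$ then separates the four classes $V_{0,0},V_{0,1},V_{1,0},V_{1,1}$ --- this is what actually proves $H_1\not\cong H_2$ (your parenthetical about zero loci containing the lines $z=\pm1$ is suggestive but is not an invariant as stated). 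Second, and this is the ingredient your sketch is missing: given $\Phi(P_{q_1})=P_{q_2}$, Proposition \ref{prop-equiv} first shows $\Phi(\underline{x}^{[1]})=\lambda\underline{x}^{[1]}$ by intersecting the ideals $(\underline{x}^{[1]},P_{q_2}-c)$ over all $c$, and then applies $\Phi$ to the \emph{perturbed} polynomials $P_{q_1}+\alpha\underline{x}^{[1]}-c=P_{q_1+\alpha}-c$; the fibrewise classification applied to the two-parameter family in $(\alpha,c)$ forces $q_1+\alpha$ and $q_2+\lambda\alpha$ to have the same zeros for every $\alpha$, hence $q_2=\lambda q_1$, which for $q_k=(t-1)^k$ gives $k=k'$. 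Without this perturbation trick (or an equivalent classification of the embeddings rather than of the abstract fibres), the non-equivalence of the $V(Q_k)$ is not established. (Minor slips that do not affect the structure: the coefficient of $s$ in $H_2$'s equation evaluated at $z=\pm1$ is $-1$, not $1$, and for $k=1$ your factor $1+s(z^2-1)^{k-1}=1+s$ is not a unit along $z=\pm1$.)
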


It is  worth mentioning that the special case of affine spaces is still open, for both cancellation and stable equivalence problems. Recall that the question to know whether an isomorphism $V\times\C^m\simeq\C^{n+m}$ implies $V\simeq\C^n$ is usually referred to as the ``Zariski cancellation problem''. It has a positive solution for $n=1$ and for $n=2$ by the results of Fujita and Miyanishi-Sugie (\cite{Fujita}, \cite{MiySugie}), whereas it is still an unsolved problem for $n\geq3$.

Similarly, it was asked in \cite{MLRSY} if every hypersurface in $\C^{n+1}$, which is stably equivalent to a (linear) hyperplane, is already equivalent to this hyperplane. Note that it is true for $n=1$ and also, using the cancellation property of the affine plane and a result of Kaliman \cite{Kaliman}, for $n=2$. Moreover, as noticed in \cite{MLRSY}, a positive answer to this question for an integer $n\geq3$ would imply that the $n$-dimensional affine space has the cancellation property.

\section{Four hypersurfaces in $\C^{n+2}$}

Let us  fix some notations.

\begin{notation}\label{notation_Pq}
Given a ring $R$ and an integer $m\in\N$, we denote by $R^{[m]}$ the polynomial ring in $m$ variables over $R$.
Throughout this paper,  we fix  a positive integer $n$ and we  denote by $\C[\underline{x}]$ the polynomial ring $\C[x_1,\ldots,x_n]\simeq\C^{[n]}$ in the variables $x_1,\ldots,x_n$.

For every integer $k\in\N$, we  denote by $\underline{x}^{[k]}$ the element $\underline{x}^{[k]}=x_1^k\cdots x_n^k\in\C[\underline{x}]$ and, for every polynomial $q\in\C^{[1]}$,  by $P_q$ the polynomial of $\C[x_1,\ldots,x_n,y,z]=\C[\underline{x}][y,z]$ defined by $$P_q=\underline{x}^{[2]}y+z^2+\underline{x}^{[1]}q(z^2).$$
\end{notation}

The  counterexamples to the stable equivalent problem mentioned in the introduction are realized as hypersurfaces in $\C^{n+2}$ given by the fibers $V(P_q-c)$ of some polynomials $P_q$.  We will  determine the isomorphism classes of these varieties. This will be done by using techniques mainly developed by Makar-Limanov  in  \cite{ML01}.  The idea is to exploit the fact that  this kind of hypersurfaces admit  additive group actions, but not too many. For instance, their Makar-Limanov invariants are non trivial.

It is in general very difficult and technical to compute such invariants. But we are in a good situation, since the method of Kaliman and Makar-Limanov (\cite{Kaliman-ML}) applies to the varieties that we are considering. Moreover, we can even use directly  the results of Dubouloz \cite{Dubouloz}, who already did the computation for the  case where the polynomial $q$ is constant. Remark that, thanks to the next lemma,  it suffices to consider only this  special case.

\begin{lemma}\label{lem_phic}
Let $R=\C[\underline{x}]\simeq\C^{[n]}$. Given $q\in\C^{[1]}$  and $c\in\C$, we let $g_c\in\C^{[1]}$ be the polynomial such that the equality $q(z^2)-q(c)=g_c(z^2)(z^2-c)$ holds in $\C[z]$. Then, the endomorphism $\varphi_c\in\End_{R}R[y,z]$ of $R[y,z]$ fixing $R$ and defined by $$\varphi_c(y)=\left(1+\underline{x}^{[1]}g_c(z^2)\right)y+q(c)g_c(z^2)\quad\text{and}\quad\varphi_c(z)=z$$
induces an isomorphism between the rings $\C[\underline{x},y,z]/(P_q-c)$ and $\C[\underline{x},y,z]/(P_{q(c)}-c)$.
\end{lemma}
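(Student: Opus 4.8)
The strategy is to verify directly that $\varphi_c$ is a well-defined endomorphism of $R[y,z]$ that descends to the quotient rings, and then to exhibit an explicit inverse, thereby showing it is an isomorphism. First I would observe that $\varphi_c$ is unambiguously defined as an $R$-algebra endomorphism of $R[y,z]=R^{[2]}$, since $R[y,z]$ is free as a polynomial ring and one may send the generators $y,z$ to any elements; here $\varphi_c(z)=z$ and $\varphi_c(y)$ is the displayed polynomial. The key computation is to show that $\varphi_c$ maps the ideal $(P_q-c)$ into the ideal $(P_{q(c)}-c)$. Concretely, I would compute $\varphi_c(P_q-c)$ by substituting: since $\varphi_c$ fixes $R$ and $z$, we get
\begin{equation*}
\varphi_c(P_q)=\underline{x}^{[2]}\varphi_c(y)+z^2+\underline{x}^{[1]}q(z^2)
=\underline{x}^{[2]}\bigl(1+\underline{x}^{[1]}g_c(z^2)\bigr)y+\underline{x}^{[2]}q(c)g_c(z^2)+z^2+\underline{x}^{[1]}q(z^2).
\end{equation*}
Using the defining relation $q(z^2)=q(c)+g_c(z^2)(z^2-c)$, one rewrites $\underline{x}^{[1]}q(z^2)=\underline{x}^{[1]}q(c)+\underline{x}^{[1]}g_c(z^2)(z^2-c)$, and the terms should reorganize so that $\varphi_c(P_q-c)$ equals $P_{q(c)}-c$ plus a multiple of $P_q-c$ (or, more likely, equals exactly an element congruent to $P_{q(c)}-c$). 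I expect the clean outcome that $\varphi_c(P_q-c)$ is, modulo $(P_{q(c)}-c)$ in $\C[\underline{x},y,z]$, actually zero — equivalently that $\varphi_c$ carries $P_q-c$ to an associate of $P_{q(c)}-c$; this is precisely the step requiring careful bookkeeping and is the main obstacle, since one must see that the "$q(c)$" constant term gets absorbed correctly and that the coefficient $1+\underline{x}^{[1]}g_c(z^2)$ of $y$ is exactly what is needed.

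Once that identity is established, $\varphi_c$ induces a ring homomorphism $\bar\varphi_c\colon \C[\underline{x},y,z]/(P_q-c)\to\C[\underline{x},y,z]/(P_{q(c)}-c)$ fixing the images of $\underline{x}$ and $z$. To show it is an isomorphism I would construct its inverse in the same form: set $\psi\in\End_R R[y,z]$ with $\psi(z)=z$ and $\psi(y)=\bigl(1+\underline{x}^{[1]}g_c(z^2)\bigr)^{-1}\bigl(y-q(c)g_c(z^2)\bigr)$. This expression is not a polynomial over $R$, so instead I would argue at the level of the quotient rings: in $\C[\underline{x},y,z]/(P_{q(c)}-c)$ one can solve for $y$ and check that the element $1+\underline{x}^{[1]}g_c(z^2)$ acts invertibly, or — more cleanly — simply run the symmetric computation. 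Indeed, by the same Lemma applied with $q$ replaced by the constant polynomial $q(c)$ (for which $g_c=0$, making $\varphi_c$ the identity) the argument is vacuous in that direction, so the honest route is: define $\psi_c$ on $R[y,z]$ by $\psi_c(z)=z$, $\psi_c(y)=\bigl(1+\underline{x}^{[1]}g_c(z^2)\bigr)y-q(c)g_c(z^2)$, check $\psi_c\circ\varphi_c$ and $\varphi_c\circ\psi_c$ are congruent to the identity modulo the relevant ideals (using that $\bigl(1+\underline{x}^{[1]}g_c(z^2)\bigr)^2\equiv 1+2\underline{x}^{[1]}g_c(z^2)\pmod{\underline{x}^{[2]}}$ and that $\underline{x}^{[2]}$ times anything is $\equiv -z^2-\underline{x}^{[1]}q(z^2)+c$ up to the ideal, so the $\underline{x}^{[2]}$-terms reduce). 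Verifying these two compositions reduce to the identity on the quotients is routine once the first identity is in hand, and completes the proof that $\bar\varphi_c$ is an isomorphism $\C[\underline{x},y,z]/(P_q-c)\xrightarrow{\ \sim\ }\C[\underline{x},y,z]/(P_{q(c)}-c)$.
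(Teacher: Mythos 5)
Your overall strategy coincides with the paper's: verify directly that $\varphi_c$ sends $P_q-c$ into the ideal $(P_{q(c)}-c)$, then exhibit an explicit polynomial endomorphism inducing the inverse. The first half is fine, and the identity you are groping for is exact and clean: $\varphi_c(P_q-c)=\bigl(1+\underline{x}^{[1]}g_c(z^2)\bigr)\bigl(P_{q(c)}-c\bigr)$. (Note this is a non-unit multiple, which is all one needs for the induced map on quotients; it is not an ``associate'', and membership in $(P_{q(c)}-c)$ is the relevant condition, not ``$P_{q(c)}-c$ plus a multiple of $P_q-c$''.)

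The genuine gap is in the second half: the inverse you propose, $\psi_c(y)=\bigl(1+\underline{x}^{[1]}g_c(z^2)\bigr)y-q(c)g_c(z^2)$, is wrong, and the congruences you invoke cannot repair it. Write $u=\underline{x}^{[1]}g_c(z^2)$. Then $\psi_c\circ\varphi_c(y)=(1+u)^2y+u\,q(c)g_c(z^2)=y+2uy+u^2y+u\,q(c)g_c(z^2)$, and the cross term $2uy=2\underline{x}^{[1]}g_c(z^2)y$ cannot be reduced modulo $P_q-c$: the relation only lets you rewrite $\underline{x}^{[2]}y$, never $\underline{x}^{[1]}y$. Concretely, for $n=1$, $q(t)=t$, $c=0$ one has $g_0=1$ and $\psi_0(y)=(1+x)y$, so $\psi_0(x^2y+z^2)=x^2(1+x)y+z^2$, which does not lie in the ideal $\bigl(x^2y+(1+x)z^2\bigr)$; hence your $\psi_c$ does not even descend to a map between the quotient rings. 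The correct inverse requires both the opposite sign and $q(z^2)$ in place of $q(c)$, namely $\psi_c(y)=\bigl(1-\underline{x}^{[1]}g_c(z^2)\bigr)y-q(z^2)g_c(z^2)$: then $(1+u)(1-u)=1-\underline{x}^{[2]}g_c(z^2)^2$ carries the full factor $\underline{x}^{[2]}$, one finds $\psi_c(P_{q(c)}-c)=\bigl(1-\underline{x}^{[1]}g_c(z^2)\bigr)(P_q-c)$ and $\psi_c\circ\varphi_c(y)=y-g_c(z^2)^2(P_q-c)\equiv y$. With that correction the argument is exactly the paper's.
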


\begin{proof}
First, one checks that $\varphi_c\left(P_q-c\right)=\left(1+\underline{x}^{[1]}g_c(z^2)\right)\left(P_{q(c)}-c\right)$. Thus, $\varphi_c$ induces a morphism between  $\C[\underline{x},y,z]/(P_q-c)$ and $\C[\underline{x},y,z]/(P_{q(c)}-c)$. The latter is invertible. To see this, one checks that the inverse morphism is induced by the endomorphism $\psi_c\in\End_{R}R[y,z]$ defined by $\psi_c(y)=\left(1-\underline{x}^{[1]}g_c(z^2)\right)y-q(z^2)g_c(z^2)$ and $\varphi_c(z)=z$.
\end{proof}

We will now compute, for all $q\in\C^{[1]}$ and all $c\in\C$, the set $\LND(B_{q,c})$ of locally nilpotent derivations on the coordinate ring $B_{q,c}$ of the varieties $V(P_q-c)$. Recall that a derivation $\delta$ of a $\C$-algebra $B$ is called \emph{locally nilpotent} if there exists, for every element $b\in B$, an integer $m=m(b)\geq1$ such that $\delta^m(b)=0$.
Let $\Delta$ be the derivation of $\C[\underline{x},y,z]$ defined by $$\Delta=\underline{x}^{[2]}\frac{\partial}{\partial z}-2z(1+\underline{x}^{[1]}q'(z^2))\frac{\partial}{\partial y},$$
where $q'$ denotes the derivative of $q$. Note that $\Delta$ is locally nilpotent (it is a triangular derivation) and that it annihilates the polynomial $P_q-c$. Therefore, it induces a locally nilpotent derivation on $B_{q,c}$, which we still denote by $\Delta$. It turns out that all other locally nilpotent derivations on $B_{q,c}$ are multiple of $\Delta$ by elements of $\C[\underline{x}]$.

\begin{proposition}\label{prop-lnd}
Let $q\in\C^{[1]}$, $c\in\C$ and $B_{q,c}=\C[\underline{x},y,z]/(P_q-c)$, where $P_q=\underline{x}^{[2]}y+z^2+\underline{x}^{[1]}q(z^2)\in\C[\underline{x},y,z]$.  Then, the following hold for every nonzero locally nilpotent derivation $\delta$ of $B_{q,c}$.
\begin{enumerate}
\item $\Ker(\delta)=\C[\underline{x}]$  and $\Ker(\delta^2)=\C[\underline{x}]z+\C[\underline{x}]$.
\item There exists $h(\underline{x})\in\C[\underline{x}]$ such that $\delta=h(\underline{x})\Delta$,
where $\Delta$ is the locally nilpotent derivation on $B_{q,c}$ defined above.
\end{enumerate}
\end{proposition}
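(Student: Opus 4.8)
The plan is to reduce both assertions to the case where $q$ is constant and then to isolate the one genuinely hard input --- the description of $\Ker(\delta)$ --- which is already available in \cite{Dubouloz}.

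\emph{Step 1 (reduction to constant $q$).} By Lemma~\ref{lem_phic} the isomorphism $\varphi_c\colon\C[\underline x,y,z]/(P_q-c)\to\C[\underline x,y,z]/(P_{q(c)}-c)$ fixes $\C[\underline x]$ and sends $z$ to $z$; hence assertion~(1) for $B_{q,c}$ is equivalent to assertion~(1) for $B_{q(c),c}$. For assertion~(2) one checks in addition that $\varphi_c$ conjugates the distinguished derivations into one another, $\varphi_c^{-1}\circ\Delta_{q(c)}\circ\varphi_c=\Delta_q$: this is a direct computation on the generators $x_i,z,y$, in which the identity $q(z^2)-q(c)=g_c(z^2)(z^2-c)$ and its derivative $q'(z^2)=g_c(z^2)+g_c'(z^2)(z^2-c)$ make everything collapse. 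So it suffices to treat the case $q=a\in\C$, where $B_{a,c}=\C[\underline x,y,z]/(\underline x^{[2]}y+z^2+a\underline x^{[1]}-c)$ and $\Delta=\underline x^{[2]}\partial_z-2z\partial_y$.

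\emph{Step 2 (kernel of a nonzero LND).} The core assertion is that $\Ker(\delta)=\C[\underline x]$ for every nonzero locally nilpotent derivation $\delta$ of $B_{a,c}$; this is exactly the computation Dubouloz performed for constant $q$ in \cite{Dubouloz}, by the Kaliman--Makar-Limanov method \cite{Kaliman-ML}. I would recall its shape: one puts a weight degree on $\C[\underline x,y,z]$ (for instance $\deg x_i=1$, $\deg z=n$, $\deg y=0$) for which $\underline x^{[2]}y+z^2$ is the leading form of $P_a-c$, so that the associated graded ring of $B_{a,c}$ is the degenerate ring $\overline B=\C[\underline x,y,z]/(\underline x^{[2]}y+z^2)$; a nonzero $\delta$ then degenerates to a nonzero homogeneous locally nilpotent derivation of $\overline B$, every such derivation of $\overline B$ is a $\C[\underline x]$-multiple of $\underline x^{[2]}\partial_z-2z\partial_y$, and this information is carried back to $B_{a,c}$. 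I expect the transfer from $\overline B$ to $B_{a,c}$ --- in effect, excluding locally nilpotent derivations that do not annihilate all the $x_i$ --- to be the one real obstacle, and I would invoke \cite{Dubouloz} for it. Getting the equality, and not just the inclusion $\C[\underline x]\subseteq\Ker(\delta)$, is then formal: $\Ker(\delta)$ is factorially closed, hence algebraically closed in $B_{a,c}$, and has transcendence degree $n$ over $\C$, while $\C[\underline x]$ is algebraically closed in $B_{a,c}$ --- its fraction field is $\C(\underline x)(z)$ (eliminate $y$), and using the free decomposition $B_{a,c}=\C[\underline x][y]\oplus\C[\underline x][y]\,z$ arising from $z^2=c-a\underline x^{[1]}-\underline x^{[2]}y$ one sees that $B_{a,c}\cap\C(\underline x)=\C[\underline x]$.

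\emph{Step 3 (from the kernel to the full statement).} Since $\underline x^{[1]}\in\Ker(\delta)$ and $\Ker(\delta)$ is factorially closed, each $x_i\in\Ker(\delta)$; hence $\delta$ extends to a $\C[x_1^{\pm1},\dots,x_n^{\pm1}]$-linear locally nilpotent derivation of the localization $B_{a,c}[(\underline x^{[1]})^{-1}]=\C[x_1^{\pm1},\dots,x_n^{\pm1}][z]$. A locally nilpotent derivation of $A^{[1]}$ vanishing on $A$ is of the form $a\,\partial_z$ with $a\in A$ (its value on $z$ must be of $z$-degree $0$, otherwise its iterates on $z$ grow in $z$-degree); with $A=\C[x_1^{\pm1},\dots,x_n^{\pm1}]$ this gives $\delta(z)\in\C[x_1^{\pm1},\dots,x_n^{\pm1}]$, whence $\delta(z)\in B_{a,c}\cap\C(\underline x)=\C[\underline x]$. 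In particular $z\in\Ker(\delta^2)$, so $\C[\underline x]z+\C[\underline x]\subseteq\Ker(\delta^2)$; conversely any $b\in\Ker(\delta^2)$ satisfies $p\,\partial_z b=\delta(b)\in\C[\underline x]$ (where $\delta=p\,\partial_z$ on the localization, $p\neq0$), so $\partial_z b\in\C[x_1^{\pm1},\dots,x_n^{\pm1}]$, forcing $b$ to have $z$-degree $\le1$, and then the free decomposition gives $b\in\C[\underline x]z+\C[\underline x]$; thus $\Ker(\delta^2)=\C[\underline x]z+\C[\underline x]$. Finally, applying $\delta$ to $P_a-c$ and using $\delta(x_i)=0$ yields $\underline x^{[2]}\delta(y)=-2z\,\delta(z)$; comparing the $z$-components with respect to $B_{a,c}=\C[\underline x][y]\oplus\C[\underline x][y]\,z$ forces $\underline x^{[2]}\mid\delta(z)$ in $\C[\underline x]$, and for $h:=\delta(z)/\underline x^{[2]}\in\C[\underline x]$ one checks on the generators $x_i,z,y$ that $\delta=h(\underline x)\Delta$. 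Together with Step~1 this proves the Proposition for every $q$.
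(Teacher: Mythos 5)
Your proof is correct, and its skeleton --- reduce to the case of constant $q$ via Lemma~\ref{lem_phic}, quote \cite{Dubouloz} for the hard Makar-Limanov-type computation, then finish by elementary algebra --- is the same as the paper's. The differences lie in the finishing arguments and are genuine. For assertion (1), the paper imports from \cite{Dubouloz} both $\Ker(\delta)=\C[\underline{x}]$ and the inclusion $\Ker(\delta^2)\subset\C[\underline{x},z]$, and then pins down $\Ker(\delta^2)$ using factorial closedness of $\Ker(\delta)$; you only need $\Ker(\delta)=\C[\underline{x}]$ from the reference and recover $\Ker(\delta^2)$ yourself by localizing at $\underline{x}^{[1]}$, where $B_{a,c}$ becomes $\C[x_1^{\pm1},\dots,x_n^{\pm1}][z]$ and $\delta$ becomes $p\,\partial_z$ --- a slightly lighter dependence on \cite{Dubouloz} and a clean substitute. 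For assertion (2), the paper does \emph{not} reduce to constant $q$: it works with general $q$ directly, lifts the relation $0=\delta(P_q-c)$ to the polynomial ring $\C^{[n+2]}$, and extracts $\underline{x}^{[2]}\mid\delta(z)$ by a two-step divisibility argument. You instead reduce (2) to the constant case via the conjugation $\varphi_c\circ\Delta_q=\Delta_{q(c)}\circ\varphi_c$; this identity does hold (it boils down to $q'(z^2)=g_c(z^2)+g_c'(z^2)(z^2-c)$ together with $\underline{x}^{[2]}\bigl(\underline{x}^{[1]}y+q(c)\bigr)=\underline{x}^{[1]}(c-z^2)$ in $B_{q(c),c}$), but it is an extra computation the paper avoids, and you should write it out rather than merely assert it. In exchange, your divisibility step via the free decomposition $B_{a,c}=\C[\underline{x}][y]\oplus\C[\underline{x}][y]z$ is shorter and more transparent than the paper's lifting argument. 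Both routes are sound.
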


\begin{proof}
(1) First of all, remark that we can suppose that $q$ is a constant polynomial.  Indeed, take the isomorphism $\phi:B_{q,c}\to B_{q(c),c}$ given by Lemma \ref{lem_phic} and let $\delta\in\LND(B_{q,c})\setminus\{0\}$ be a nonzero locally nilpotent derivation. Then, $\widetilde\delta=\phi\circ\delta\circ\phi^{-1}\in\LND(B_{q(c),c})\setminus\{0\}$ and we have $\Ker(\delta)=\phi^{-1}(\Ker(\widetilde\delta))$ and $\Ker(\delta^2)=\phi^{-1}(\Ker(\widetilde\delta^2))$. Since $\phi^{-1}$ maps $\C[\underline{x}]$ onto $\C[\underline{x}]$ and $\C[\underline{x}]z+\C[\underline{x}]$ onto $\C[\underline{x}]z+\C[\underline{x}]$, it suffices to prove $\Ker(\widetilde\delta)=\C[\underline{x}]$ and $\Ker(\widetilde\delta^2)=\C[\underline{x}]z+\C[\underline{x}]$.

So, let $q,c\in\C$ be two constants and let $\delta$ be a nonzero locally nilpotent derivation on $B_{q,c}$. We are now in the case considered by  Dubouloz  in \cite{Dubouloz}, where he proved (see paragraph 2.7 in \cite{Dubouloz}) that $\Ker(\delta)=\C[\underline{x}]$  and $\Ker(\delta^2)\subset\C[\underline{x},z]$ hold. This implies easily $\Ker(\delta^2)=\C[\underline{x}]z+\C[\underline{x}]$.

Indeed, let $a\in\Ker(\delta^2)\setminus\Ker(\delta)$ and write $a=\sum_{i=0}^{d}\alpha_i(\underline{x})z^i$ with $d\geq1$ and $\alpha_i(\underline{x})\in\C[\underline{x}]$. Then $\delta(a)=\delta(z)\sum_{i=1}^{d}i\alpha_i(\underline{x})z^{i-1}$ is a nonzero element of $\Ker(\delta)$. Since the kernel of a locally nilpotent derivation is factorially closed, it follows that $\delta(z)$ lies in $\Ker(\delta)$. Thus, $z\in\Ker(\delta^2)$ and so $\C[\underline{x}]z+\C[\underline{x}]\subset\Ker(\delta^2)$. On the other hand, $\delta(a)\in\Ker(\delta)$ implies $d=1$, since $\Ker(\delta)=\C[\underline{x}]$. Therefore, $a\in\C[\underline{x}]z+\C[\underline{x}]$ and (1) is proved.

(2) Let $\delta\in\LND(B_{q,c})\setminus\{0\}$. By (1), $\Ker(\delta)=\C[\underline{x}]$ and there exists a polynomial $a(\underline{x})\in\C[\underline{x}]\setminus\{0\}$  such that $\delta(z)=a(\underline{x})$. To prove (2), it suffices to find an element $h(\underline{x})\in\C[\underline{x}]$ such that $a(\underline{x})=\underline{x}^{[2]}h(\underline{x})$, since $$0=\delta(P_q-c)=\underline{x}^{[2]}\delta(y)+a(\underline{x})2z(1+\underline{x}^{[1]}q'(z^2)).$$
The equality above means that there exist  polynomials $F,R\in\C^{[n+2]}$ such that
$$\underline{X}^{[2]}F(\underline{X},Y,Z)+a(\underline{X})2Z(1+\underline{X}^{[1]}q'(Z^2))=R(\underline{X},Y,Z)(\underline{X}^{[2]}Y+Z^2+\underline{X}^{[1]}q(Z^2)-c).$$
From this, it follows that $a(\underline{X})$ and $R(\underline{X},Y,Z)$ are both divisible by $\underline{X}^{[1]}$. Setting $a(\underline{X})=\underline{X}^{[1]}\widetilde{a}(\underline{X})$ and $R(\underline{X},Y,Z)=\underline{X}^{[1]}\widetilde{R}(\underline{X},Y,Z)$, we obtain the equality
$$\underline{X}^{[1]}F(\underline{X},Y,Z)+\widetilde{a}(\underline{X})2Z(1+\underline{X}^{[1]}q'(Z^2))=\widetilde{R}(\underline{X},Y,Z)(\underline{X}^{[2]}Y+Z^2+\underline{X}^{[1]}q(Z^2)-c).$$
The latter  implies that $\widetilde{a}(\underline{X})$ is divisible by $\underline{X}^{[1]}$. Thus, $a(\underline{x})=\underline{x}^{[2]}h(\underline{x})$ for an element $h(\underline{x})\in\C[\underline{x}]$. This completes the proof.
\end{proof}

We are now in position to classify all hypersurfaces in $\C^{n+2}$ given by an equation of the form $P_q=c$. They have exactly four isomorphism classes. Each of them is given by one of the following varieties.

\begin{notation} We denote by $V_{0,0}, V_{0,1}, V_{1,0}, V_{1,1}$ the hypersurfaces in $\C^{n+2}$ defined by the equation $\underline{x}^{[2]}y+z^2=0$, $\underline{x}^{[2]}y+z^2-1=0$, $\underline{x}^{[2]}y+z^2+\underline{x}^{[1]}=0$ and $\underline{x}^{[2]}y+z^2+\underline{x}^{[1]}-1=0$, respectively.
\end{notation}

These varieties are pairwise non-isomorphic and we have the following result, which was already proved in \cite{MJP} for the case $n=1$.

\begin{proposition}\label{prop_isom}
Let $q\in\C^{[1]}$, $c\in\C$. and let $P_q=\underline{x}^{[2]}y+z^2+\underline{x}^{[1]}q(z^2)\in\C[\underline{x},y,z]$ as in Notation \ref{notation_Pq}. Then, the variety $V(P_q-c)$ is isomorphic to:
\begin{enumerate}
\item  $V_{0,0}$ if an only if $c=0$ and $q(c)=0$;
\item  $V_{1,0}$ if an only if $c=0$ and $q(c)\neq0$;
\item  $V_{0,1}$ if an only if $c\neq0$ and $q(c)=0$;
\item  $V_{1,1}$ if an only if $c\neq0$ and $q(c)\neq0$.
\end{enumerate}
\end{proposition}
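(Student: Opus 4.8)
The plan is to prove both directions simultaneously by reducing to Lemma~\ref{lem_phic} and then using Proposition~\ref{prop-lnd} to distinguish the four candidate varieties. The first, easy half is to produce the explicit isomorphisms. By Lemma~\ref{lem_phic}, $V(P_q-c)\cong V(P_{q(c)}-c)$, so we may assume $q\equiv q(c)$ is the \emph{constant} $q(c)$. If $q(c)=0$ we land on $V(z^2+\underline{x}^{[2]}y-c)$, which is $V_{0,0}$ when $c=0$ and $V_{0,1}$ when $c\neq0$ (rescale $z$ and $y$ to normalize $c$ to $1$); if $q(c)\neq0$ we land on $V(z^2+\underline{x}^{[1]}q(c)+\underline{x}^{[2]}y-c)$, and the change of variables $x_i\mapsto x_i/q(c)^{1/n}$ (together with a rescaling of $y$) turns $\underline{x}^{[1]}q(c)$ into $\underline{x}^{[1]}$, giving $V_{1,0}$ when $c=0$ and, after also normalizing $c$, $V_{1,1}$ when $c\neq0$. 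This establishes the ``if'' directions of all four items.

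The substantive half is showing the four varieties $V_{0,0},V_{0,1},V_{1,0},V_{1,1}$ are pairwise non-isomorphic, since then the displayed equivalences are forced: the isomorphism type of $V(P_q-c)$ depends only on which of the four it matches, and by the above that is governed precisely by whether $c=0$ and whether $q(c)=0$. To separate the four, I would use two invariants coming from Proposition~\ref{prop-lnd}. First, whether the smooth locus misses the origin: $V_{0,0}$ is singular (the whole locus $x_1=\dots=x_n=z=0$ lies on it and, for $n\geq 2$, the variety is singular there; for $n=1$ one checks the partials), while $V_{0,1},V_{1,0},V_{1,1}$ have different singular loci or are smooth — more robustly, the quotient $B_{q,c}/\!\!\sqrt{(\underline{x})}$, i.e.\ the restriction to $\{x_1=\dots=x_n=0\}$, is $\C[z]/(z^2-c)$ up to the $q$-term, so it is a point ($c=0$, as a non-reduced or reduced scheme) versus two points ($c\neq0$): this distinguishes $\{V_{0,0},V_{1,0}\}$ from $\{V_{0,1},V_{1,1}\}$ in an isomorphism-invariant way (it is the fiber over the unique closed orbit issue, or simply: the ideal $(\underline{x})\cap B$ is intrinsic as the radical of the ideal generated by all $\delta(z)$ over $\delta\in\LND(B)$, by Proposition~\ref{prop-lnd}(2)).

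Within each pair, the remaining distinction is between $q(c)=0$ and $q(c)\neq0$, i.e.\ whether $\underline{x}^{[1]}$ appears. Here I would argue via Proposition~\ref{prop-lnd}: the kernel of any nonzero locally nilpotent derivation is $\C[\underline{x}]$, and $\Ker(\delta^2)=\C[\underline{x}]z+\C[\underline{x}]$, so from an abstract isomorphism $B_{q,c}\cong B_{q',c'}$ we get an isomorphism of pairs $(\C[\underline{x}],\ \C[\underline{x}]z+\C[\underline{x}])$ respecting the algebra structure; modulo the intrinsic ideal $\mathfrak{a}$ ``$=(\underline{x})$'' identified above, the element $z$ satisfies $z^2\equiv c$ in $B/\mathfrak{a}B$, and one level deeper, working modulo $\mathfrak{a}^2$ (or equivalently comparing the relation $\underline{x}^{[2]}y+z^2+\underline{x}^{[1]}q(c)=c$ in the associated graded for the $\mathfrak{a}$-adic filtration), the class of $z^2-c$ in $\mathfrak{a}/\mathfrak{a}^2$ is either $0$ (when $q(c)=0$) or a nonzero multiple of $\underline{x}^{[1]} \bmod \mathfrak{a}^2$ (when $q(c)\neq0$). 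Since an isomorphism must preserve this graded data, $V_{0,0}\not\cong V_{1,0}$ and $V_{0,1}\not\cong V_{1,1}$. Combining both invariants gives the four pairwise non-isomorphisms, and with the ``if'' directions the proposition follows.

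The main obstacle I anticipate is making the second invariant (the $\mathfrak{a}$-adic ``defect'' of $z^2-c$) genuinely intrinsic rather than coordinate-dependent — that is, checking that $\mathfrak{a}$ is canonically $(\underline{x})$ via the $\LND$ description, that $z$ is pinned down in $\Ker(\delta^2)/\Ker(\delta)$ up to the obvious $\C[\underline{x}]$-ambiguity, and that the residual ambiguity does not let one erase the $\underline{x}^{[1]}$ term. For $n=1$ this is exactly what was done in \cite{MJP}, and the computation there generalizes; for $n\geq2$ the same filtration argument applies once $\mathfrak{a}=(\underline{x})$ is known to be intrinsic, which is immediate from Proposition~\ref{prop-lnd}(2) because $\underline{x}^{[2]}$ — hence, after taking radicals, $(\underline{x})$ — is generated by the set $\{\,\delta(z): \delta\in\LND(B)\,\}$ applied to the distinguished element $z\in\Ker(\delta^2)\setminus\Ker(\delta)$.
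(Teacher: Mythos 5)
Your overall strategy is the paper's: reduce to constant $q$ via Lemma~\ref{lem_phic} (the ``if'' directions are fine), then use Proposition~\ref{prop-lnd} to constrain any isomorphism $\varphi:B_1\to B_2$ and extract from it the two invariants ``$c=0$ or not'' and ``$q(c)=0$ or not''. However, there is a genuine gap in your second invariant, and it occurs exactly in the case $n\geq 2$ that the proposition is really about. You declare the intrinsic ideal $\mathfrak{a}$ to be $(x_1,\dots,x_n)$, justified by saying that it is the radical of the ideal generated by $\{\delta(z):\delta\in\LND(B)\}$. By Proposition~\ref{prop-lnd}(2) that ideal is $(\underline{x}^{[2]})=(x_1^2\cdots x_n^2)$, whose radical is $(\underline{x}^{[1]})=(x_1\cdots x_n)$ --- \emph{not} the maximal ideal $(x_1,\dots,x_n)$ when $n\geq2$. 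Worse, with $\mathfrak{a}=(x_1,\dots,x_n)$ your proposed invariant is identically trivial for $n\geq 2$: the relation gives $z^2-c=-\underline{x}^{[2]}y-\underline{x}^{[1]}q(c)$, and $\underline{x}^{[1]}=x_1\cdots x_n\in\mathfrak{a}^n\subseteq\mathfrak{a}^2$, so the class of $z^2-c$ in $\mathfrak{a}/\mathfrak{a}^2$ is $0$ whether or not $q(c)=0$. Hence your argument does not separate $V_{0,0}$ from $V_{1,0}$, nor $V_{0,1}$ from $V_{1,1}$, for any $n\geq2$ (for $n=1$ it reduces to the known case of \cite{MJP}).

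The repair is essentially what the paper does: the ideal that is canonically attached to the $\LND$ structure is $(\underline{x}^{[1]})B$, and one must work modulo $(\underline{x}^{[2]})=\left((\underline{x}^{[1]})\right)^2$ rather than modulo the square of the maximal ideal; there $-\underline{x}^{[1]}q(c)$ survives precisely when $q(c)\neq0$. Concretely, the paper first uses $\Delta$ and Proposition~\ref{prop-lnd}(2) to show $a\,\varphi(\underline{x}_1^{[2]})=h(\underline{x}_2)\underline{x}_2^{[2]}$, which forces $\varphi(x_i)=\lambda_i x_{\sigma(i)}$, and then reads the image of the defining relation modulo $(\underline{x}^{[2]})$ to obtain $c_1=A(\underline{0},0,0)c_2$ and $\lambda q_1=A(\underline{0},0,0)q_2$, which yields both dichotomies at once. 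Note also two points you leave open that this computation handles: that the coefficient $\alpha$ in $\varphi(z_1)=\alpha z_2+\beta(\underline{x}_2)$ is a nonzero \emph{constant} (obtained by running the argument for $\varphi^{-1}$ as well), and that $\beta$ cannot perturb the comparison (one shows $\beta\in(\underline{x}^{[2]})$). Your first invariant ($c=0$ versus $c\neq0$) is salvageable once $\mathfrak{a}$ is corrected to $(\underline{x}^{[1]})$, since $B/(\underline{x}^{[1]})$ still has one versus two connected components, but as written the proof of pairwise non-isomorphism is incomplete.
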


\begin{proof}
By Lemma \ref{lem_phic}, the variety $V(P_q-c)$ is isomorphic to the hypersurface of equation $$\underline{x}^{[2]}y+z^2+\underline{x}^{[1]}q(c)-c=0.$$ The ``if parts'' of the proposition follow then easily.

In order to prove that  $V_{0,0}, V_{0,1}, V_{1,0}$ and $V_{1,1}$ are non-isomorphic, we consider two polynomials  $q_1,q_2\in\C^{[1]}$ and two constants $c_1,c_2\in\C$. For $j=1,2$, let $B_j$ denotes the ring $B_j=\C[\underline{x},y,z]/(P_{q_j}-c_j)$ and let  $x_{i_j},y_j,z_j$ denote the images of $x_i, y, z$ in $B_j$. We also denote by $\C\left[\underline{x}_j\right]$ the ring $\C[x_{1_j},\ldots,x_{n_j}]$. Suppose now that $\varphi:B_1\to B_2$ is an isomorphism.

Let $\delta\in\LND(B_1)\setminus\{0\}$ be a nonzero locally derivation on $B_1$. Then, $\widetilde\delta=\varphi\circ\delta\circ\varphi^{-1}$ is a nonzero locally derivation on $B_2$ and we have $\Ker(\widetilde\delta)=\varphi(\Ker(\delta))$ and $\Ker((\widetilde\delta)^2)=\varphi(\Ker(\delta^2))$. By Proposition \ref{prop-lnd}, we have $\Ker(\delta)=\C[\underline{x}_1]$ and $\Ker(\widetilde\delta)=\C[\underline{x}_2]$. Thus,  $\varphi$ restricts to an isomorphism between $\C[\underline{x}_1]$ and $\C[\underline{x}_2]$. Moreover $\varphi(z_1)\in\Ker((\widetilde\delta)^2)=\C[\underline{x}_2]z_2+\C[\underline{x}_2]$. Therefore, $\varphi(z_1)=\alpha(\underline{x}_2)z_2+\beta(\underline{x}_2)$ for some polynomials $\alpha$ and $\beta$. Repeating the same argument with $\varphi^{-1}$, we obtain that $\varphi^{-1}(z_2)=a(\underline{x}_1)z_1+b(\underline{x}_1)$ for some polynomials $a$ and $b$. From this, we get that the elements $\alpha(\underline{x}_2)\in\C[\underline{x}_2]$ and $a(\underline{x}_1)\in\C[\underline{x}_1]$
are in fact invertible, thus nonzero constants.

If we take  the derivation $\delta=\Delta$ (see Proposition \ref{prop-lnd}), one checks  that $\widetilde\delta(z_2)=\varphi(\Delta(a z_1+b(\underline{x}_1)))=a\varphi(\underline{x}_1^{[2]})$. Consequently,  there exists, again by Proposition \ref{prop-lnd}, a polynomial $h$ such that $a\varphi(\underline{x}_1^{[2]})=h(\underline{x}_2)\underline{x}_2^{[2]}$. Since $\varphi:\C[\underline{x}_1]\to\C[\underline{x}_2]$ is an isomorphism, this implies that there exist a bijection $\sigma$ of the set $\{1,\ldots,n\}$  and nonzero constants $\lambda_{i}\in\C^*$ such that $\varphi(x_i)=\lambda_{i}x_{\sigma(i)}$ for all $1\leq i\leq n$.

Let $\lambda=\prod_{i=1}^n\lambda_i$ and suppose from now on that $q_1$ and $q_2$ are constant. Since $\lambda^2\underline{x}_2^{[2]}\varphi(y_1)+(\alpha z_2+\beta(\underline{x}_2))^2+\lambda\underline{x}_2^{[1]}q_1-c_1=\varphi(\underline{x}_1^{[2]}y_1+z_1^2+\underline{x}_1^{[1]}q_1-c_1)=0$ in $B_2$, there exist polynomials $F,A\in\C^{[n+2]}$ such that
$$\lambda^2\underline{x}^{[2]}F(\underline{x},y,z)+(\alpha z+\beta(\underline{x}))^2+\lambda q_1\underline{x}^{[1]}-c_1=A(\underline{x},y,z)(\underline{x}^{[2]}y+z^2+q_2\underline{x}^{[1]}-c_2). $$
Looking at this equality modulo $(\underline{x}^{[2]})$, it follows that $\beta(\underline{x})$ lies in the ideal  of $\C[\underline{x}]$ generated by $\underline{x}^{[2]}$, and that $c_1=A(\underline{0},0,0)c_2$ and $\lambda q_1=A(\underline{0},0,0)q_2$. This shows that $V_{0,0}, V_{0,1}, V_{1,0}, V_{1,1}$ are pairwise non-isomorphic and  proves the proposition.
\end{proof}

\begin{remark}
Even if they are non-isomorphic,  the varieties $V_{0,1}$ and $V_{1,1}$ are biholomorphic. Indeed, the analytic automorphism $\Psi$  of $\C[\underline{x},y,z]$ defined by $\Psi(x_i)=x_i$ for all $1\leq i\leq n$, $$\Psi(y)=\exp(-\underline{x}^{[1]})y-\frac{\exp(-\underline{x}^{[1]})-1+\underline{x}^{[1]}}{\underline{x}^{[2]}}\quad\text{and}\quad \Psi(z)=\exp(-\frac{1}{2}\underline{x}^{[1]})z,$$ satisfies $\Psi(\underline{x}^{[2]}y+z^2+\underline{x}^{[1]}-1)=\exp(-\underline{x}^{[1]})(\underline{x}^{[2]}y+z^2-1)$.
The case $n=1$ is due to Freudenburg and Moser-Jauslin \cite{Freudenburg-MJ} and it was, to our knowledge, the first explicit example in the literature of two algebraically non-isomorphic varieties that are holomorphically isomorphic. Note that Jelonek \cite{Jelonek} has recently constructed other examples, in every dimension $d\geq2$, of rational varieties  with these properties.

\end{remark}

\section{Stable equivalence}

In this paper, we will consider  two  notions of equivalence.

\begin{definition}\hfill
\begin{enumerate}
\item Two hypersurfaces $H_1,H_2\subset\C^n$ are said to be \emph{equivalent} if there exists a polynomial automorphism $\Phi$ of $\C^n$ such that $\Phi(H_1)=H_2$.
\item Two polynomials $P,Q\in\C^{[n]}$ are said to be \emph{equivalent} if there exists a polynomial automorphism $\Phi$ of $\C^n$ such that $\Phi^*(P)=Q$.
\end{enumerate}
\end{definition}

These two notions are of course closely related, the zero-sets $V(P)$ and $V(Q)$ of irreducible polynomials  $P,Q\in\C^{[n]}$ being equivalent hypersurfaces in $\C^n$ if and only if there exists a nonzero constant $\mu\in\C^*$ such that $P$ and $\mu Q$ are equivalent polynomials in $\C^{[n]}$.

The next proposition gives the classification, up to equivalence, of all polynomials $P_q$ (see Notation \ref{notation_Pq}) and of their fibers $V(P_q-c)$. It is an easy generalization  of results of \cite{MJP} to the case $n\geq2$.

\begin{proposition}\label{prop-equiv}
Let $q_1,q_2\in\C^{[1]}$ be two polynomials and $c_1,c_2\in\C$ be two constants. For $i=1,2$, let $P_{q_i}=\underline{x}^{[2]}y+z^2+\underline{x}^{[1]}q_i(z^2)\in\C[\underline{x},y,z]$ as in Notation \ref{notation_Pq}. Then, the following hold.
\begin{enumerate}
\item  The polynomials $P_{q_1}-c_1$ and $P_{q_2}-c_2$ of $\C^{[n+2]}$ are equivalent if and only if $c_1=c_2$ and there exists a nonzero constant $\lambda\in\C^*$ such that $q_2=\lambda q_1$.

\item The hypersurfaces $H_1=V(P_{q_1}-c_1), H_2=V(P_{q_2}-c_2)\subset\C^{n+2}$ are equivalent if and only if there exist two nonzero constants $\lambda,\mu\in\C^*$ such that $c_2=\mu^{-1}c_1$ and  such that the equality $q_2(t)=\lambda q_1(\mu t)$ holds in $\C[t]$.
\end{enumerate}
\end{proposition}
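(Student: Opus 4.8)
The two ``if'' implications are proved by exhibiting explicit automorphisms of $\C^{n+2}$. For (1), assume $c_1=c_2=:c$ and $q_2=\lambda q_1$; then the linear automorphism $\Phi$ sending $x_1\mapsto\lambda x_1$ and $y\mapsto\lambda^{-2}y$ and fixing $x_2,\dots,x_n,z$ satisfies $\Phi^*(P_{q_1}-c)=P_{q_2}-c$, as one checks at once. For (2), assume $c_2=\mu^{-1}c_1$ and $q_2(t)=\lambda q_1(\mu t)$, choose $\eta\in\C^*$ with $\eta^2=\mu$, and let $\Phi$ send $x_1\mapsto\lambda\mu\,x_1$, $y\mapsto\lambda^{-2}\mu^{-1}y$, $z\mapsto\eta z$ and fix $x_2,\dots,x_n$; a direct computation gives $\Phi^*(P_{q_1}-c_1)=\mu(P_{q_2}-c_2)$. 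Since every polynomial $P_q-c$ is irreducible --- it is linear in $y$ with leading coefficient $\underline{x}^{[2]}$ and constant term $z^2+\underline{x}^{[1]}q(z^2)-c$, which is coprime to $\underline{x}^{[2]}$ because no $x_j$ divides $z^2-c$ --- an identity of the form $\Phi^*(P_{q_1}-c_1)=\mu(P_{q_2}-c_2)$ says precisely, by the observation recorded before the proposition, that $V(P_{q_1}-c_1)$ and $V(P_{q_2}-c_2)$ are equivalent hypersurfaces.

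For the two ``only if'' implications I would argue simultaneously. Suppose $\Psi$ is an automorphism of $\C[\underline{x},y,z]=\C^{[n+2]}$ with $\Psi(P_{q_1}-c_1)=\mu(P_{q_2}-c_2)$, where $\mu=1$ in case (1) and $\mu\in\C^*$ is arbitrary in case (2); the goal is to find $\lambda\in\C^*$ with $c_2=\mu^{-1}c_1$ and $q_2(t)=\lambda q_1(\mu t)$, the case $\mu=1$ being exactly statement (1). Since $\Psi$ carries the ideal $(P_{q_1}-c_1)$ onto $(P_{q_2}-c_2)$, it descends to a ring isomorphism $\bar\Psi\colon B_{q_1,c_1}\to B_{q_2,c_2}$, and the analysis in the proof of Proposition~\ref{prop_isom} (the part that does not assume $q$ constant) applies: pushing the locally nilpotent derivation $\Delta$ forward along $\bar\Psi$ and invoking Proposition~\ref{prop-lnd}, one obtains that $\bar\Psi$ restricts to an isomorphism $\C[\underline{x}]\to\C[\underline{x}]$ of the form $x_i\mapsto\lambda_ix_{\sigma(i)}$ for a permutation $\sigma$ of $\{1,\dots,n\}$ and constants $\lambda_i\in\C^*$, and that $\bar\Psi(z)=\alpha z+\beta(\underline{x})$ for some $\alpha\in\C^*$ and $\beta\in\C[\underline{x}]$.

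The next step --- the technical heart of the argument --- is to lift these statements from $B_{q_2,c_2}$ to $\C^{[n+2]}$ itself, that is, to show that $\Psi(x_i)=\lambda_ix_{\sigma(i)}$ and $\Psi(z)=\alpha z+\beta(\underline{x})$ hold already in $\C[\underline{x},y,z]$. A priori these components differ from their claimed normal forms by multiples of $P_{q_2}-c_2$; one shows these correction terms vanish, exactly as in \cite{MJP}, by a degree-and-invertibility argument: a nonzero correction would force $\Psi(x_i)$ or $\Psi(z)$ to have degree at least $\deg(P_{q_2}-c_2)\geq 3$, which cannot be reconciled with $\Psi$ being an automorphism and with the precise shape of the identity $\Psi(P_{q_1}-c_1)=\mu(P_{q_2}-c_2)$. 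Once $\Psi(x_i)$ and $\Psi(z)$ are pinned down, $\Psi$ restricts to an automorphism of $\C[\underline{x},z][y]$ over $\C[\underline{x},z]$, so $\Psi(y)=a(\underline{x},z)\,y+b(\underline{x},z)$ with $a$ a unit, hence a nonzero constant. I expect this lifting step to be the only real difficulty; what precedes it is the locally-nilpotent-derivation bookkeeping of Proposition~\ref{prop_isom}, and what follows it is elementary.

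With the normal form of $\Psi$ in hand, the conclusion is a short computation. Writing $L=\prod_i\lambda_i$ and using $\Psi(\underline{x}^{[2]})=L^2\underline{x}^{[2]}$ and $\Psi(\underline{x}^{[1]})=L\underline{x}^{[1]}$, the identity $\Psi(P_{q_1}-c_1)=\mu(P_{q_2}-c_2)$ becomes
\[
L^2\underline{x}^{[2]}\,\Psi(y)+(\alpha z+\beta)^2+L\underline{x}^{[1]}q_1\bigl((\alpha z+\beta)^2\bigr)-c_1=\mu\bigl(\underline{x}^{[2]}y+z^2+\underline{x}^{[1]}q_2(z^2)-c_2\bigr).
\]
Reducing modulo $x_j$ (for any fixed $j$) kills every term carrying a factor $\underline{x}^{[1]}$ or $\underline{x}^{[2]}$ and leaves $(\alpha z+\beta|_{x_j=0})^2-c_1=\mu z^2-\mu c_2$; comparing the coefficients of $z^2$, $z$ and $1$ gives $\alpha^2=\mu$, $\beta|_{x_j=0}=0$ and $c_1=\mu c_2$. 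Hence $\beta$ lies in $(x_1\cdots x_n)=(\underline{x}^{[1]})$, say $\beta=\underline{x}^{[1]}\gamma(\underline{x})$. Cancelling the terms $\mu z^2$ and $-\mu c_2$ from the displayed identity, one sees that both sides are then divisible by $\underline{x}^{[1]}$; dividing through by $\underline{x}^{[1]}$ and reducing modulo $x_j$ (using $\alpha^2=\mu$) leaves $2\alpha z\,(\gamma|_{x_j=0})+L\,q_1(\mu z^2)=\mu\,q_2(z^2)$. Separating the terms odd and even in $z$: the odd part forces $\gamma|_{x_j=0}=0$ for every $j$ (so in particular $\beta\in(\underline{x}^{[2]})$), and the even part forces $\mu\,q_2(z^2)=L\,q_1(\mu z^2)$. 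Therefore $q_2(t)=\lambda q_1(\mu t)$ with $\lambda=L/\mu\in\C^*$ (substituting $t=z^2$, legitimate because the identity holds for all $z\in\C$); together with $c_2=\mu^{-1}c_1$ this proves (2), and specializing $\mu=1$ yields $c_1=c_2$ and $q_2=\lambda q_1$, which is (1).
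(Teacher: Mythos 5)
Your two ``if'' directions and the reduction of hypersurface equivalence to polynomial equivalence (via the irreducibility of $P_q-c$) are fine, and your closing coefficient comparison would indeed deliver the conclusion \emph{if} its starting identity were available. The problem is exactly the step you flag as the technical heart: lifting $\bar\Psi(x_i)=\lambda_i x_{\sigma(i)}$ and $\bar\Psi(z)=\alpha z+\beta(\underline{x})$ from the quotient ring $B_{q_2,c_2}$ to exact identities in $\C[\underline{x},y,z]$. A priori $\Psi(x_i)=\lambda_ix_{\sigma(i)}+G_i\cdot(P_{q_2}-c_2)$ and $\Psi(z)=\alpha z+\beta+H\cdot(P_{q_2}-c_2)$, so your final display only holds modulo $(P_{q_2}-c_2)$ unless $G_i$ and $H$ vanish; reducing modulo $x_j$ then only gives information in $\C[\underline{x},y,z]/(x_j,P_{q_2}-c_2)$, where $P_{q_2}-c_2\equiv z^2-c_2$, and the comparison of the coefficients of $1$, $z$, $z^2$ that your argument relies on is no longer legitimate. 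The justification you offer for the vanishing of the corrections --- that a nonzero correction would give $\Psi(x_i)$ degree at least $3$, ``which cannot be reconciled with $\Psi$ being an automorphism'' --- is not a proof: components of polynomial automorphisms of $\C^{n+2}$ can have arbitrarily large degree, and nothing in the shape of $\Psi(P_{q_1}-c_1)=\mu(P_{q_2}-c_2)$ immediately forbids such corrections. So as written there is a genuine gap precisely where you predicted the difficulty would be.

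The paper sidesteps this lifting entirely, and its route is worth adopting. It observes that the single automorphism $\Phi$ induces isomorphisms $B_{q_1,c_1+c}\to B_{q_2,c_2+c}$ for \emph{every} $c\in\C$ simultaneously, whence $\Phi(\underline{x}^{[1]})\in\bigcap_{c}\left(\underline{x}^{[1]},P_{q_2}-c_2-c\right)=\bigcap_{c}\left(\underline{x}^{[1]},z^2-c_2-c\right)=\left(\underline{x}^{[1]}\right)$; since $\Phi$ is an automorphism this forces the \emph{exact} identity $\Phi(\underline{x}^{[1]})=\lambda\underline{x}^{[1]}$ with no control needed on $\Phi(z)$ or $\Phi(y)$. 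Then, instead of comparing coefficients, it uses $\Phi(P_{q_1}-c_1+\alpha\underline{x}^{[1]}-c)=P_{q_2}-c_2+\alpha\lambda\underline{x}^{[1]}-c$ and reads off $c_1=c_2$ and $q_2=\lambda q_1$ by applying the isomorphism-type classification of Proposition \ref{prop_isom} to the two-parameter family $V(P_{q_i+\alpha}-c_i-c)$. If you prefer to keep your computation, you must supply an actual proof of the lifting step (this is essentially what is done, with real work, in \cite{MJP} for $n=1$); otherwise, replace it with the pencil-of-fibers argument above.
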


\begin{proof}
(1) Suppose that $P_{q_1}-c_1$ and  $P_{q_2}-c_2$ are equivalent polynomials of $\C[\underline{x},y,z]$ and let $\Phi$ be an automorphism of $\C[\underline{x},y,z]$ such that $\Phi(P_{q_1}-c_1)=P_{q_2}-c_2$. The key of the proof is to show that $\Phi(\underline{x}^{[1]})=\lambda\underline{x}^{[1]}$ for some constant $\lambda\in\C^*$. Afterwards, we can conclude exactly as in \cite{MJP}.

Remark that $\Phi$ induces, for every $c\in\C$, an isomorphism $\Phi_c$ between the rings $B_1=\C[\underline{x},y,z]/(P_{q_1}-c_1-c)$ and $B_2=\C[\underline{x},y,z]/(P_{q_2}-c_2-c)$. Therefore, as we have seen in the proof of Proposition \ref{prop_isom}, the element $\Phi_c(\underline{x}^{[1]})$ lies in the ideal $\underline{x}^{[1]}B_2$. Thus, $$\Phi(\underline{x}^{[1]})\in\bigcap_{c\in\C}\left(\underline{x}^{[1]}, P_{q_2}-c_2-c\right)=\bigcap_{c\in\C}\left(\underline{x}^{[1]},z^2-c_2-c\right)=\left(\underline{x}^{[1]}\right).$$
Since $\Phi$ is an automorphism, this implies that there exists a nonzero constant $\lambda\in\C^*$ such that $\Phi(\underline{x}^{[1]})=\lambda\underline{x}^{[1]}$, as desired.

Now, since
$\Phi(P_{q_1}-c_1+\alpha\underline{x}^{[1]}-c)=P_{q_2}-c_2+\alpha\lambda\underline{x}^{[1]}-c$, the varieties $V(P_{q_1+\alpha}-c_1-c)$ and $V(P_{q_2+\alpha\lambda}-c_2-c)$ are isomorphic for all $\alpha,c\in\C$. By Proposition \ref{prop_isom}, this implies that $c_1=c_2$ and then that the zeros of the polynomials $q_1+\alpha$ and $q_2+\alpha\lambda$ are the same for all $\alpha\in\C$. Thus, $q_2=\lambda q_1$.

Conversely, if $q_2=\lambda q_1$ for some $\lambda\in\C^*$, it suffices to check that $\Phi(P_{q_1})=P_{q_2}$, where  $\Phi$ is the  automorphism of $\C[\underline{x},y,z]$ defined by $\Phi(x_1)=\lambda x_1$, $\Phi(x_i)=x_i$ for all $2\leq i\leq n$, $\Phi(y)=\lambda^{-2}y$ and $\Phi(z)=z$. This proves the assertion $(1)$.

(2) The hypersurfaces $H_1=V(P_{q_1}-c_1)$ and $H_2=V(P_{q_2}-c_2)$ are equivalent if and only if there exists a nonzero constant $\mu\in\C^*$ such that the polynomials $P_{q_1}-c_1$ and $\mu(P_{q_2}-c_2)$ are equivalent. Then, Assertion (2) follows from Assertion (1), noting that  $\mu(P_{q_2}-c_2)$ is equivalent to the polynomial  $P_{\widetilde{q_2}}-\mu c_2$, where $\widetilde{q_2}$ denotes the element  of $\C[t]$ defined by $\widetilde{q_2}(t)=q_2(\mu^{-1}t)$. Indeed, one checks that this equivalence is realized by the automorphism of $\C^{n+2}$ defined by $(x_1,x_2,\ldots,x_n,y,z)\mapsto(\mu^{-1}x_1,x_2,\ldots,x_n,\mu y,\epsilon z)$, where $\epsilon$ is any complex number such that $\epsilon^2=\mu^{-1}$.
\end{proof}

Before we state the next result, let us recall the notion of \emph{stable equivalence}.

\begin{definition}\hfill
\begin{enumerate}
\item Two hypersurfaces $H_1,H_2\subset\C^n$ are said to be \emph{stably equivalent} if there exists a $m\in\N$ such that $H_1\times\C^m$ and $H_2\times\C^m$ are equivalent hypersurfaces in $\C^{n+m}$.
\item Two polynomials $P,Q\in\C^{[n]}$ are said to be \emph{stably equivalent} if there exists a $m\in\N$ such that $P$ and $Q$ are equivalent polynomials of $\C^{[n+m]}$.
\end{enumerate}
\end{definition}

In this context, we have the following obvious generalization of Theorem 2.5' of \cite{MJP}.

\begin{lemma}\label{lemme-equiv-stable}
For every  $q\in\C^{[1]}$, the polynomials $P_q$ and $P_{q(0)}$ are stably equivalent.
\end{lemma}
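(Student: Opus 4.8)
The plan is to imitate the proof of Theorem 2.5' of \cite{MJP} and realize the stable equivalence by a single extra variable, i.e. to exhibit an automorphism $\Phi$ of $\C[\underline{x},y,z,w]=\C^{[n+3]}$ with $\Phi^*(P_q)=P_{q(0)}$ (viewing both $P_q$ and $P_{q(0)}$ as elements of $\C^{[n+3]}$ that do not involve $w$). The starting observation is that in $\C[\underline{x}][y,z]$ one has the identity $q(z^2)-q(0)=z^2 r(z^2)$ for a unique $r\in\C^{[1]}$, so that
\[
P_q-P_{q(0)}=\underline{x}^{[1]}\bigl(q(z^2)-q(0)\bigr)=\underline{x}^{[1]}z^2\,r(z^2).
\]
Thus the difference between the two polynomials lies in the ideal generated by $\underline{x}^{[1]}z^2$, and the point is to absorb this difference using the $w$-variable together with the locally nilpotent derivation $\Delta$ already at our disposal (here for the constant polynomial $q(0)$), which satisfies $\Delta(z)=\underline{x}^{[2]}$.

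First I would record that $P_{q(0)}$ has the following feature: modulo $P_{q(0)}$, the element $z^2$ equals $-\underline{x}^{[2]}y-\underline{x}^{[1]}q(0)+ (\text{the value of }P_{q(0)})$, so on the hypersurface $z^2$ is divisible by $\underline{x}^{[1]}$; more precisely $z^2+\underline{x}^{[1]}q(0)\equiv -\underline{x}^{[2]}y$. This lets one rewrite $\underline{x}^{[1]}z^2 r(z^2)$ modulo $P_{q(0)}$ as an element divisible by $\underline{x}^{[2]}$, and $\underline{x}^{[2]}=\Delta(z)$ is in the image of the corresponding $\mathbb{G}_a$-action. The construction then proceeds exactly as in \cite{MJP}: one uses the triangular automorphism of $\C[\underline{x},y,z,w]$ sending $w\mapsto w$, $z\mapsto z+\underline{x}^{[2]}w$ (an exp of $w\Delta$-type shear in the free variable $w$), composed with a shear in $y$ chosen to cancel the resulting cross-terms, so that the image of $P_{q(0)}$ (now regarded in $n+3$ variables, independent of $w$) becomes $P_{q(0)}$ plus a controlled correction; one then checks that a further explicit triangular automorphism, built from the polynomial $r$ and using the freedom in $w$, transforms this into $P_q$. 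All the maps involved are compositions of elementary (triangular) automorphisms of $\C^{[n+3]}$, hence automorphisms, and their construction is a direct transcription of the $n=1$ case in \cite{MJP}, the variables $x_2,\dots,x_n$ playing a purely passive role and $\underline{x}^{[1]},\underline{x}^{[2]}$ behaving formally like $x_1,x_1^2$ did there.

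The only genuine work, and the step I expect to be the main obstacle, is the bookkeeping of the shears: one must verify that the correction terms produced when applying the $w$-shear to $P_{q(0)}$ can be cancelled, degree by degree in $w$ and in $z$, by a compensating automorphism that is still polynomial and invertible — i.e. that no denominators are introduced. This works precisely because $P_q-P_{q(0)}$ is divisible by $\underline{x}^{[1]}z^2$ and, on the level of the hypersurface, by $\underline{x}^{[2]}=\Delta(z)$, so the needed "primitive" with respect to $\Delta$ exists in the polynomial ring. Once that divisibility is in hand the rest is a routine (if slightly tedious) explicit computation, identical in shape to \cite{MJP}, and I would simply refer the reader there for the detailed formulas rather than reproduce them.
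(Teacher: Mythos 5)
Your proposal follows essentially the same route as the paper: both reduce to the explicit automorphism of $\C[x,y,z,w]$ constructed in \cite{MJP} for $n=1$, observing that since that automorphism treats $x$ as a passive parameter one may formally substitute $\underline{x}^{[1]}$ for $x$ (and $\underline{x}^{[2]}$ for $x^2$) to obtain the required automorphism of $\C[\underline{x},y,z,w]$ sending one polynomial to the other. The paper additionally writes out the formulas and verifies invertibility by exhibiting the explicit inverse $\Psi$ rather than appealing to a decomposition into triangular maps, but the underlying argument is the one you describe.
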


\begin{proof}
The case $n=1$ was proved in \cite{MJP}, where an explicit automorphism $\Phi$ of $\C[x,y,z,w]$, fixing $x$ and satisfying $\Phi(x^2y+z^2+xq(z^2))=x^2y+z^2+xq(0)$, is constructed. Since this automorphism fixes $x$, it suffices to replace formally $x$ by $\underline{x}^{[1]}$ to get an automorphism of $\C[\underline{x},y,z,w]$ which maps $P_q$ onto $P_{q(0)}$.   For the sake of completeness, let us give the formula.

Let $r\in\C[t]$ be the polynomial such that the equality $q(t)-q(0)=2t r(t)$ holds. We let $\Phi(x_i)=x_i$ for all $1\leq i\leq n$, $\Phi(z)=(1-\underline{x}^{[1]}r(P_{q(0)}))z+\underline{x}^{[2]}w$ and $\Phi(w)=(1+\underline{x}^{[1]}r(P_{q(0)}))w-(r(P_{q(0)}))^2z$. Note that $\Phi(z^2+\underline{x}^{[1]}q(z^2))\equiv z^2+\underline{x}^{[1]}q(0)\mod(\underline{x}^{[2]})$. Therefore, we can choose $\Phi(y)\in\C[\underline{x},y,z,w]$ such that $\Phi(P_q)=P_{q(0)}$. Doing so, we get an endomorphism (we will show that it is in fact an automorphism) $\Phi$ of  $\C[\underline{x},y,z,w]$ which maps $P_q$ onto $P_{q(0)}$.

Similarly, we define an endomorphism $\Psi$ of  $\C[\underline{x},y,z,w]$ such that  $\Psi(P_{q(0)})=P_q$ by posing $\Psi(x_i)=x_i$ for all $1\leq i\leq n$, $\Psi(z)=(1+\underline{x}^{[1]}r(P_{q}))z-\underline{x}^{[2]}w$ and $\Psi(w)=(1-\underline{x}^{[1]}r(P_{q}))w+(r(P_{q}))^2z$.

Now, one checks that $\Phi\circ\Psi(z)=z$ and that $\Phi\circ\Psi(w)=w$. Moreover, since $\Phi\circ\Psi(P_{q(0)})=P_{q(0)}$, we have $\underline{x}^{[2]}\Phi\circ\Psi(y)+z^2+\underline{x}^{[1]}q(0)=\Phi\circ\Psi(P_{q(0)})=P_{q(0)}=\underline{x}^{[2]}y+z^2+\underline{x}^{[1]}q(0)$. This implies that  $\Phi\circ\Psi(y)=y$. Therefore, $\Psi$ is the inverse morphism of $\Phi$. This proves the lemma.
\end{proof}

Together with Propositions \ref{prop_isom} and \ref{prop-equiv}, Lemma \ref{lemme-equiv-stable} leads to many counterexamples to the ``stable equivalence problem'' of every dimension $d\geq2$. Finally, let us emphasize two particular examples.

\begin{example}\hfill
\begin{enumerate}
\item The polynomials $P=\underline{x}^{[2]}y+z^2+\underline{x}^{[1]}(z^2-1)-1$ and $Q=\underline{x}^{[2]}y+z^2+\underline{x}^{[1]}(z^2-1)-1$ of $\C[\underline{x},y,z]$ are stably equivalent, but the hypersurfaces $V(P)$ and $V(Q)$ in $\C^{n+2}$ are not equivalent. Indeed, they are even non-isomorphic varieties.
\item The polynomials $Q_k=\underline{x}^{[2]}y+z^2+\underline{x}^{[1]}(z^2-1)^k\in\C[\underline{x},y,z]$ are stably equivalent for all $k\geq1$, whereas the hypersurfaces $V(Q_k)\subset\C^{n+2}$ are pairwise non-equivalent. However, the varieties $V(Q_k-c)$ and $V(Q_{k'}-c)$ are isomorphic for all $k,k'\geq1$ and every $c\in\C$.
\end{enumerate}
\end{example}

\end{document}